\def\a{{\mathbf a}}
\def\b{{\mathbf b}}
\newtheorem{theorem}{Theorem}
\newtheorem{remark}{Remark}
\newtheorem{lemma}{Lemma}
\numberwithin{cor}{section}
\begin{document}
\title{Gaps in  sumsets of $s$ pseudo $s$-th power sequences}
\begin{abstract}We study the length of the gaps between consecutive members in the sumset $sA$ when $A$ is a pseudo $s$-th power sequence, with $s \ge 2$. We show that, almost surely, $\lim\sup (b_{n+1}-b_{n})/\log (b_n) = s^s s!/\Gamma^s(1/s)$, where $b_n$ are the elements of $sA$.
\end{abstract}
\author{Javier Cilleruelo}

\thanks{The first author was supported by grants MTM 2011-22851 of MICINN  and ICMAT Severo Ochoa project SEV-2011-0087.\\  \indent Both authors are thankful to Ecole Polytechnique which made their collaboration easier.}
\address{Instituto de Ciencias Matem\'aticas (CSIC-UAM-UC3M-UCM) and
Departamento de Matem\'aticas\\
Universidad Aut\'onoma de Madrid\\
28049, Madrid, Espa\~na} \email{franciscojavier.cilleruelo@uam.es}
\author{ Jean-Marc Deshouillers }
\address{Bordeaux INP\\
Institut Math\'{e}matique de Bordeaux\\
33405 Talence, France}
 \email{jean-marc.deshouillers@math.u-bordeaux.fr}
\subjclass{2000 Mathematics Subject Classification: 11B83.}
\keywords{Additive Number Theory, Pseudo $s$-th powers, Probabilistic method}
\maketitle

\section{Introduction}

Erd\H os and R\'{e}nyi \cite{ER60} proposed in 1960 a probabilistic model for sequences $A$ growing like the $s$-th powers: they build a probability space $(\mathcal{U}, \mathcal{T}, P)$ and a sequence of independent random variables $(\xi_n)_{n \in \mathbb{N}}$ with values in $\{0, 1\}$ and $P(\xi_n =1)= \frac 1s n^{-1+1/s}$; to any $u \in \mathcal{U}$, they associate the sequence of integers $A = A_u$ such that $n \in A_u$ if and only if $\xi_n(u) = 1$. In short, the events $\{n \in A\}$ are independent and $P(n \in A) = \frac 1s n^{-1+1/s}$. The counting function of these random sequences $A$  satisfies almost surely the asymptotic relation $|A\cap [1, x]|\sim x^{1/s}$, whence the terminology \textit{pseudo $s$-th powers}.
 Erd\H os and R\'{e}nyi studied the random variable $r_s(A,n)$ which counts the number of representations of $n$ in the form $n=a_1+\cdots +a_s,\ a_1\le \cdots \le a_s,\ a_i\in A$. For the simplest case $s=2$ they proved that $r_2(A,n)$ converges to a Poisson distribution with parameter $\pi/8$, when $n\to\infty$. They also claimed the analogous result for $s>2$ but their analysis did not take into account the dependence of some events. J. H. Goguel \cite{G75} proved indeed that for each integer $d$, the sequence of the integers $n$ such that $r_s(A,n)=d$ has almost surely the density $\lambda_s ^d e^{-\lambda_s} / d!$, where $\lambda_s =\Gamma^s(1/s)/(s^ss!)$. B. Landreau \cite{L95} gave a proof of this result based on correlation inequalities and also showed that the sequence of random variables $(r_s(A,n))_n$  converges in law towards the Poisson distribution with parameter $\lambda_s$.\\
\indent In particular,  both the sets of the integers belonging, or not belonging, to
$sA=\{a_1 + \cdots + a_s \; : \; a_i \in A\}$
have almost surely a positive density and it makes sense to study the length of the gaps in $sA$. The aim of the paper is to obtain a precise estimate for the maximal length of such gaps.
\begin{theorem}\label{main}
For any $s\ge 2$ the  sequence $sA=(b_n)_n$, sum of $s$ copies of a pseudo $s$-th power sequence $A$, satisfies almost surely
\begin{equation}\label{limit}\limsup_{n\to \infty}\frac{b_{n+1}-b_n}{\log b_n}= \frac{s^ss!}{\Gamma^s(1/s)}.
\end{equation}
\end{theorem}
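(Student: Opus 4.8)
The plan is to prove the statement by establishing the upper and lower bounds for the $\limsup$ separately, both holding almost surely. Let me write $\lambda_s = \Gamma^s(1/s)/(s^s s!)$ so that the target constant is $1/\lambda_s$. The key probabilistic input is that for a pseudo $s$-th power $A$, the number of representations $r_s(A,n)$ behaves, for large $n$, like a Poisson random variable with parameter $\lambda_s$; in particular $P(n \notin sA) = P(r_s(A,n)=0) \to e^{-\lambda_s}$ as $n \to \infty$. A gap of length $L$ in $sA$ around $n$ means that none of $L$ consecutive integers near $n$ lies in $sA$, i.e. $r_s(A,m)=0$ for all $m$ in an interval of length $L$.

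First I would address the upper bound $\limsup (b_{n+1}-b_n)/\log b_n \le 1/\lambda_s$. Fix $\eps>0$ and consider the event that there is a gap of length at least $(1+\eps)\lambda_s^{-1}\log N$ inside $[N, 2N]$. Heuristically, if the events $\{m \notin sA\}$ were independent, the probability that a fixed window of length $L = (1+\eps)\lambda_s^{-1}\log N$ is gap-free would be roughly $(e^{-\lambda_s})^L = N^{-(1+\eps)}$; summing over the $O(N)$ possible window positions and then over dyadic scales $N=2^k$ gives a convergent series, so Borel--Cantelli yields that almost surely only finitely many such long gaps occur. The main obstacle here is that the events $\{m \in sA\}$ are \emph{not} independent — they share summands from $A$. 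I would control this via the correlation/decoupling estimates of the Goguel--Landreau type (the same machinery used to prove the Poisson convergence in the excerpt), bounding the joint probability $P(r_s(A,m)=0,\ \forall m \in I)$ for an interval $I$ from above by something close to $\prod_{m\in I} P(r_s(A,m)=0)$, up to an acceptable error. One clean way is to restrict, for each target $m$, to representations using summands in disjoint blocks so as to recover genuine independence across a suitably spaced subfamily of the integers in $I$, accepting a loss that is absorbed by choosing $\eps$ small and $L$ large.

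For the lower bound $\limsup (b_{n+1}-b_n)/\log b_n \ge 1/\lambda_s$, I would fix $\eps>0$ and show that almost surely there are infinitely many gaps of length at least $(1-\eps)\lambda_s^{-1}\log N$. Here I want a second-moment or a direct independence argument across \emph{well-separated} candidate windows. Partition $[N,2N]$ into roughly $N/L$ consecutive windows of length $L=(1-\eps)\lambda_s^{-1}\log N$; the probability that a given window is entirely missing from $sA$ is at least about $N^{-(1-\eps)}$, so the expected number of gap-free windows grows like $N^{\eps}/\log N \to \infty$. To upgrade this expectation to an almost-sure statement I would estimate the variance, again invoking the correlation inequalities to show that distinct well-separated windows are nearly independent, so that the second moment is dominated by the square of the first; Chebyshev then guarantees at least one long gap in $[N,2N]$ with probability tending to $1$, and a Borel--Cantelli argument along a sparse sequence of scales $N_k$ (chosen so the events become essentially independent) upgrades this to infinitely often, almost surely.

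The step I expect to be genuinely delicate is the decoupling: in both directions I am approximating $r_s(A,m)$ by an independent Poisson model, but the representations $m=a_1+\cdots+a_s$ overlap across nearby $m$, and within a single $m$ the ordered constraint $a_1\le\cdots\le a_s$ together with the nonuniform probabilities $P(n\in A)=\tfrac1s n^{-1+1/s}$ makes the variance computation sensitive to contributions from representations with repeated or clustered summands. The cleanest route, which I would follow, is to isolate a \emph{dominant} type of representation — those with all summands of comparable size $\asymp (m/s)$ and pairwise well-separated — show these already produce the Poisson parameter $\lambda_s$ in the limit (this is precisely where the constant $\Gamma^s(1/s)/(s^s s!)$ arises, as the asymptotic count $\binom{\ }{\ }$-type integral $\int_{x_1+\cdots+x_s=m} \prod x_i^{-1+1/s}\,dx \sim \lambda_s$), and bound the remaining atypical representations as a negligible error using moment estimates. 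Matching the upper and lower constants then forces the limit \eqref{limit}.
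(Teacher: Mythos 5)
Your overall architecture is the same as the paper's: a first-moment Borel--Cantelli argument for $\limsup \le 1/\lambda_s$, and a second-moment (Chebyshev-type) argument with near-independence of well-separated windows for $\limsup \ge 1/\lambda_s$ --- the paper packages the latter as the Erd\H{o}s--R\'enyi generalization of Borel--Cantelli (Theorem \ref{Borel}), which is exactly your ``variance + sparse scales'' upgrade. The constant, the window length $\alpha\log N$ with $\alpha$ on either side of $1/\lambda_s$, and the identification of the dependence between overlapping representations as the crux are all correct. But the proposal stops precisely where the paper's work begins. Everything hinges on proving that $P(sA\cap I_i=\emptyset)=i^{-\alpha\lambda_s+o(1)}$ with an error that is $o(1)$ \emph{in the exponent} --- i.e.\ a multiplicative error $e^{o(\log i)}$ after compounding over the $\sim\alpha\log i$ integers of the window --- and that $P(F_i\cap F_j)\le P(F_i)P(F_j)(1+o(1))$ for disjoint windows. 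The pointwise Poisson limit $P(n\notin sA)\to e^{-\lambda_s}$ that you take as input is far too weak for this: you need joint control of $\Theta(\log N)$ dependent events simultaneously, uniformly in their positions. The paper obtains it from Janson's correlation inequality (Theorem \ref{des1}) combined with the quantitative estimates $\sum_{\omega\in\Omega_z}P(E_\omega)\sim\lambda_s$ (Lemma \ref{d}) and $\sum_{\omega\sim\omega'}P(E_\omega\cap E_{\omega'})\ll z^{-1/s}\log z$ (Lemma \ref{dis}), the latter resting on the convolution bounds of Lemma \ref{t1}, including the separate treatment of representations with repeated summands ($|\omega|\le s-1$). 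Your appeal to ``correlation estimates of the Goguel--Landreau type'' names the right kind of tool but supplies none of these estimates, so both directions of your argument remain heuristic.

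Moreover, the one concrete decoupling device you do propose --- restricting each target $m$ to representations whose summands lie in blocks disjoint from those allotted to the other integers of the window, so as to manufacture genuine independence --- would fail to recover the sharp constant, and in fact fails badly. The window contains $\Theta(\log N)$ integers; if their allowed summand sets must be pairwise disjoint subsets of $[1,N]$, each integer gets a set of relative density at most $1/\log N$, and the restricted representation count for each $m$ then has expectation of order $\lambda_s(\log N)^{-(s-1)}$ rather than $\lambda_s$. Summing over the window, the total exponent you can extract is $O\big((\log N)^{2-s}\big)=O(1)$, not $(1+\eps)\log N$, so the resulting gap-probability bound is useless; and no choice of ``small $\eps$'' absorbs a loss that does not tend to zero. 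The point of the Janson route is exactly that one keeps \emph{all} representations and controls the dependence additively: the correction factor $\exp\big(2\sum_{\omega\sim\omega'}P(E_\omega\cap E_{\omega'})\big)=1+O\big(i^{-1/s}(\log i)^{3}\big)$ costs nothing in the exponent. Without that mechanism (or an equivalent, e.g.\ Suen's inequality with the same correlation sums), the proof does not go through.
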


We remark that this result is heuristically consistent with the easier fact that for a random sequence $S$ with $P(n \in S) = 1 - e^{-\lambda}$, we have $\lim\sup (s_{m+1}-s_{m})/\log s_m = 1/\lambda$ almost surely.

\section{Notation and general lemmas}

\subsection{Notation}
\bigskip
We retain the notation of the introduction, for the probability space $(\mathcal{U}, \mathcal{T}, P)$ and the definition of the random sequences $A = A_u$, where the events $\{n \in A\}$ are independent and $P(n \in A) = \frac 1s n^{-1+1/s}$.
We further use the following notation.
\begin{itemize}
\item [i)] We write $\omega$ to denote a  set of distinct integers and we denote by $E_{\omega}$ and $E_{\omega}^c$ the events
$$E_{\omega}:=\{\omega \subset A\} \qquad \text{ and } \qquad E_{\omega}^c:=\{\omega \not \subset A\} $$
respectively. We write $\omega \sim \omega'$ to mean that $\omega \cap \omega'\ne \emptyset$ but $\omega \ne \omega'$: e remark that $\omega\sim\omega'$ if and only if the events $E_{\omega}$ and $E_{\omega'}$ are distinct an dependent.

If $\omega=\{x_1,\dots,x_r\}$ we write
$$\sigma(\omega)=\{a_1x_1+\cdots +a_rx_r:\  a_1+\cdots +a_r=s,\ a_i\ge 1\}$$
for the set of all integers that can be written as a sum of $s$ integers using all the integers $x_1,\dots,x_r$. We denote by $\Omega_z$ the family of sets
$$\Omega_z=\{\omega:\ z\in \sigma(\omega)\}.$$
  \item [ii)]Given $\alpha>0$, we denote by $I_i$ the interval $[i,i+\alpha\log i]$ and
  we denote by $F_i$ the event
  $$F_i:= \{sA\cap I_i=\emptyset\}.$$
  We denote by $\Omega_{I_i}$ the family of sets $$\Omega_{I_i}=\{\omega:\ \sigma(\omega)\cap I_i\ne \emptyset\}.$$

  \item [iii)]We finally let $\lambda_s=\frac{\Gamma^s(1/s)}{s!s^s}$.
\end{itemize}

\subsection{Probabilistic lemmas}
\bigskip

We use the following generalization of the Borel-Cantelli Lemma, proved indeed by P. Erd\H{o}s and A. R\'{e}nyi in 1959 \cite{ER59}.

\begin{theorem}[Borel-Cantelli Lemma]\label{Borel}Let $(F_i)_{i \in \mathbb{N}}$ be a sequence of events and let $Z_n=\sum_{i\le n}P(F_i)$.\\

If the sequence $(Z_n)_n$ is bounded, then, with probability $1$, only finitely many of the events $F_i$ occur.\\

If the sequence $(Z_n)_n$ tends to infinity and
\begin{equation}\notag
\lim_{n\to \infty}\frac{\sum_{1\le i<j\le n} P(F_i\cap F_j)-P (F_i)P (F_j)}{Z_n^2}=0,
\end{equation}
then, with probability $1$, infinitely many of the events $F_i$ occur.
\end{theorem}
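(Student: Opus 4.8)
The first assertion is the classical (\emph{convergent}) half of the Borel--Cantelli lemma, and the plan is to prove it directly. Since $Z_n=\sum_{i\le n}P(F_i)$ is nondecreasing in $n$, boundedness of $(Z_n)_n$ means exactly that $\sum_i P(F_i)<\infty$. Writing the event that infinitely many $F_i$ occur as $\limsup_i F_i=\bigcap_{m}\bigcup_{i\ge m}F_i$, I would bound, for each $m$,
\[
P\Big(\bigcup_{i\ge m}F_i\Big)\le \sum_{i\ge m}P(F_i),
\]
and let $m\to\infty$: the right-hand side is the tail of a convergent series, hence tends to $0$, which forces $P(\limsup_i F_i)=0$, i.e. almost surely only finitely many $F_i$ occur.

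For the second assertion I would use the second-moment method. Introduce the counting variables $N_{m,n}=\sum_{m\le i\le n}\mathbf{1}_{F_i}$, so that $\E[N_{m,n}]=Z_n-Z_{m-1}$ and $\{N_{m,n}>0\}=\bigcup_{i=m}^n F_i$. The engine is the Cauchy--Schwarz inequality applied on the event $\{N_{m,n}>0\}$,
\[
\E[N_{m,n}]^2=\E\big[N_{m,n}\,\mathbf{1}_{\{N_{m,n}>0\}}\big]^2\le \E[N_{m,n}^2]\,P(N_{m,n}>0),
\]
which yields the Kochen--Stone type lower bound
\[
P\Big(\bigcup_{i=m}^n F_i\Big)\ge \frac{\E[N_{m,n}]^2}{\E[N_{m,n}^2]}.
\]

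The next step is to show that this ratio tends to $1$ as $n\to\infty$ for each fixed $m$. Expanding gives $\E[N_{m,n}^2]=\E[N_{m,n}]^2+\mathrm{Var}(N_{m,n})$, and the variance splits into a diagonal contribution $\sum_{m\le i\le n}\big(P(F_i)-P(F_i)^2\big)\le Z_n$ and an off-diagonal contribution $2\sum_{m\le i<j\le n}\big(P(F_i\cap F_j)-P(F_i)P(F_j)\big)$. Because $Z_n\to\infty$ we have $Z_n=o(Z_n^2)$, while the hypothesis makes the full off-diagonal sum over $1\le i<j\le n$ equal to $o(Z_n^2)$, dominating the tail sum over $m\le i<j\le n$. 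Since $Z_n-Z_{m-1}\sim Z_n$ for fixed $m$, both contributions are negligible against $\E[N_{m,n}]^2$, so $\E[N_{m,n}]^2/\E[N_{m,n}^2]\to 1$ and therefore $P\big(\bigcup_{i\ge m}F_i\big)=1$ for every $m$. Finally the sets $\bigcup_{i\ge m}F_i$ decrease in $m$ and each has probability $1$, so continuity from above gives $P(\limsup_i F_i)=\lim_m P\big(\bigcup_{i\ge m}F_i\big)=1$, i.e. almost surely infinitely many $F_i$ occur.

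I expect the only genuine subtlety to lie in passing from the crude conclusion $P(\bigcup_i F_i)=1$ to the desired $P(\limsup_i F_i)=1$: this is precisely what the restriction to tails $N_{m,n}$ achieves, the point being that bounding $\mathrm{Var}(N_{m,n})$ by the \emph{full} off-diagonal sum furnished by the hypothesis, together with absorbing the diagonal term $Z_n$ via $Z_n\to\infty$, keeps the second-moment estimate valid uniformly as one deletes any fixed initial block of events.
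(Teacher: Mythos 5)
Your proof is correct in substance, but note that the paper does not prove this statement at all: it is quoted as a known theorem with a citation to Erd\H{o}s and R\'{e}nyi (1959), so the only benchmark is that classical argument --- and yours is essentially it. The convergent half is the standard subadditivity bound on the tails $\bigcup_{i\ge m}F_i$, and the divergent half is the second-moment (Kochen--Stone type) inequality $P\big(\bigcup_{i=m}^{n}F_i\big)\ge \E[N_{m,n}]^2/\E[N_{m,n}^2]$ applied to the tail counting variables $N_{m,n}$, followed by monotone convergence in $n$ and continuity from above in $m$. Your closing observation is also the right one: working with tails is exactly what upgrades $P(\bigcup_i F_i)=1$ to $P(\limsup_i F_i)=1$.

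One step deserves a line you did not supply. You say the hypothesis on the full off-diagonal sum over $1\le i<j\le n$ ``dominates'' the tail sum $\sum_{m\le i<j\le n}\big(P(F_i\cap F_j)-P(F_i)P(F_j)\big)$, but the summands can be negative, so the tail sum is not majorized by the full sum termwise; deleting the rows $i<m$ could in principle \emph{increase} the sum. The fix is elementary: each deleted term satisfies
\begin{equation}\notag
\big|P(F_i\cap F_j)-P(F_i)P(F_j)\big|\le P(F_j)+P(F_i)P(F_j)\le 2P(F_j),
\end{equation}
so the deleted block ($i<m$, $i<j\le n$) contributes at most $2(m-1)Z_n$ in absolute value, which is $o(Z_n^2)$ for each fixed $m$ since $Z_n\to\infty$. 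With that line inserted, $\mathrm{Var}(N_{m,n})=o(Z_n^2)$ while $\E[N_{m,n}]^2=(Z_n-Z_{m-1})^2\sim Z_n^2$, and your argument closes as written.
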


\begin{theorem}[Janson's Correlation Inequality \cite{BS89}]\label{des1}
Let $(E_{\omega})_{\omega\in \Omega}$ be a finite collection of events which are intersections of elementary independent events and assume that $P(E_{\omega})\le 1/2$ for any $\omega\in \Omega$.  Then
$$\prod_{\omega\in \Omega}P(E_{\omega}^c)\le P\big (\bigcap_{\omega\in \Omega}E_{\omega}^c\big )\le \prod_{\omega\in \Omega}P(E_{\omega}^c)\times \exp \big (2\sum_{\omega \sim \omega'}P(E_{\omega} \cap E_{\omega'})\big ),$$
where $\omega \sim \omega'$  means that the events $E_{\omega}$ and $E_{\omega'}$ are distinct and dependent.
\end{theorem}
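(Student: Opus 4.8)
The plan is to establish the two inequalities separately. The crucial structural observation is that, since each $E_{\omega}$ is an intersection of elementary independent events, it is an \emph{increasing} event in the underlying product space of the Bernoulli coordinates $(\xi_n)$, so that each $E_{\omega}^c$ is \emph{decreasing}. This is exactly the setting in which Harris's inequality (the FKG inequality for independent Bernoulli variables) applies: increasing events are positively correlated among themselves, decreasing events are positively correlated among themselves, and an increasing event is negatively correlated with a decreasing one. The lower bound $\prod_{\omega}P(E_{\omega}^c)\le P(\bigcap_{\omega}E_{\omega}^c)$ is then the easy direction: the events $E_{\omega}^c$ are all decreasing, hence positively correlated, and a straightforward induction on $|\Omega|$ using Harris's inequality yields that the probability of their intersection is at least the product of the individual probabilities.

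For the upper bound I would fix an arbitrary ordering $\omega_1,\dots,\omega_m$ of $\Omega$ and telescope the joint probability as
$$P\Big(\bigcap_{i=1}^m E_{\omega_i}^c\Big)=\prod_{i=1}^m\big(1-r_i\big),\qquad r_i:=P\Big(E_{\omega_i}\,\Big|\,\bigcap_{j<i}E_{\omega_j}^c\Big),$$
so that it suffices to bound each factor $1-r_i$ by $P(E_{\omega_i}^c)$ times an appropriate exponential.

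The heart of the proof, and the step I expect to be the main obstacle, is a lower bound for the conditional probabilities $r_i$, namely
$$r_i\ \ge\ P(E_{\omega_i})-\sum_{\substack{j<i\\ \omega_j\sim\omega_i}}P(E_{\omega_i}\cap E_{\omega_j}).$$
To prove this I would split the predecessors $j<i$ into the dependent ones, $D=\{j<i:\omega_j\sim\omega_i\}$, and the independent ones, $S=\{j<i:\omega_j\cap\omega_i=\emptyset\}$, and set $V=\bigcap_{j\in S}E_{\omega_j}^c$ and $W=\bigcap_{j\in D}E_{\omega_j}^c$, so that $r_i=P(E_{\omega_i}\cap W\mid V)/P(W\mid V)$. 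Since $E_{\omega_i}$ depends only on the coordinates indexed by $\omega_i$, which are disjoint from all coordinates governing $V$, it is independent of $V$ and $P(E_{\omega_i}\mid V)=P(E_{\omega_i})$. Bounding $P(W\mid V)\le 1$ in the denominator, and expanding $W^c=\bigcup_{j\in D}E_{\omega_j}$ by the union bound in the numerator, reduces matters to the cross terms $P(E_{\omega_i}\cap E_{\omega_j}\mid V)$; as $E_{\omega_i}\cap E_{\omega_j}$ is increasing while $V$ is decreasing, Harris's inequality gives $P(E_{\omega_i}\cap E_{\omega_j}\mid V)\le P(E_{\omega_i}\cap E_{\omega_j})$, which yields the claimed bound on $r_i$.

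It then remains to convert this into the exponential factor. Writing $1-r_i\le P(E_{\omega_i}^c)+\sum_{j<i,\,\omega_j\sim\omega_i}P(E_{\omega_i}\cap E_{\omega_j})$, factoring out $P(E_{\omega_i}^c)$, and using $1+x\le e^x$ together with $P(E_{\omega_i}^c)^{-1}\le 2$ (this is precisely where the hypothesis $P(E_{\omega})\le 1/2$ enters, and where the constant $2$ is produced) gives $1-r_i\le P(E_{\omega_i}^c)\exp\big(2\sum_{j<i,\,\omega_j\sim\omega_i}P(E_{\omega_i}\cap E_{\omega_j})\big)$. Taking the product over $i$ and collecting the resulting double sum over the dependent pairs produces the factor $\exp\big(2\sum_{\omega\sim\omega'}P(E_{\omega}\cap E_{\omega'})\big)$ of the statement, completing the upper bound.
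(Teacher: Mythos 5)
Your proposal is correct: the paper does not prove this theorem at all (it is quoted with a citation to Boppana--Spencer \cite{BS89}), and your argument is essentially the standard proof from that source --- Harris's inequality for the lower bound, and for the upper bound the telescoping into conditional probabilities $r_i$, the split of predecessors into dependent and independent families, the Harris-based estimate $P(E_{\omega_i}\cap E_{\omega_j}\mid V)\le P(E_{\omega_i}\cap E_{\omega_j})$, and the hypothesis $P(E_\omega)\le 1/2$ entering exactly through $P(E_{\omega_i}^c)^{-1}\le 2$. The only point worth making explicit is that in full generality one should recode each elementary event as $\{\xi_n=1\}$ so that every $E_\omega$ is genuinely increasing (automatic in this paper, where the elementary events are $\{n\in A\}$), and that your Harris lower bound also guarantees $P\big(\bigcap_{j<i}E_{\omega_j}^c\big)>0$, so the conditional probabilities $r_i$ are well defined.
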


\subsection{A technical lemma}
\begin{lemma}\label{t1} Given $1\le t\le s-1$ and
positive integers $a_1,\dots,a_t$ we have, as $z$ tends to infinity:
\begin{itemize}
  \item [i)]$$\sum_{\substack{x_1,\dots,x_t\\ a_1x_1+\cdots +a_tx_t=z}}(x_1\cdots x_t)^{-1+1/s}\ll z^{-1+t/s}.$$
  \item [ii)]$$\sum_{\substack{x_1,\dots,x_t\\ a_1x_1+\cdots +a_tx_t<z}}(x_1\cdots x_t)^{-1+1/s}\big (z-(a_1x_1+\cdots +a_tx_t)\big )^{-2t/s}\ll z^{-1/s}\log z.$$
  \item [iii)]$$\sum_{\substack{1\le x_1<\cdots <x_s\\x_1+\cdots +x_s=z}}(x_1\cdots x_s)^{-1+1/s}\sim s^s\lambda_s.$$
\end{itemize}
\end{lemma}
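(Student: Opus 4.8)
The plan is to view all three estimates as statements about lattice sums of the product $\prod_i x_i^{-1+1/s}$ on or below a hyperplane, and in each case to peel off variables until I am left with one‑dimensional sums that compare directly with one‑variable integrals. Parts i) and ii) are only upper bounds and will serve as tools, whereas iii) demands the exact constant. Throughout I would lean on the elementary size estimates $\sum_{n\le N}n^{-1+1/s}=sN^{1/s}+O(1)$ and, more generally, $\sum_{1\le w<N}w^{-1+\theta}\asymp N^{\theta}$ when $\theta>0$, $\asymp\log N$ when $\theta=0$, and $\asymp 1$ when $\theta<0$; these decide which exponent dominates at each splitting.

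For part i) I would induct on $t$. The case $t=1$ is immediate, since there is at most one admissible $x_1=z/a_1$, contributing $\ll z^{-1+1/s}$. For the step I write the sum as $\sum_{x_t}x_t^{-1+1/s}\sum_{a_1x_1+\cdots+a_{t-1}x_{t-1}=z-a_tx_t}\prod_{i<t}x_i^{-1+1/s}$ and bound the inner sum by $\ll (z-a_tx_t)^{-1+(t-1)/s}$ through the induction hypothesis, the bounded values of $z-a_tx_t$ being absorbed into the implied constant. There remains the one‑dimensional sum $\sum_{a_tx_t<z}x_t^{-1+1/s}(z-a_tx_t)^{-1+(t-1)/s}$, which I split at $a_tx_t\asymp z$: for $a_tx_t\le z/2$ the last factor is $\asymp z^{-1+(t-1)/s}$ and the sum over $x_t$ contributes $z^{1/s}$, while for $a_tx_t>z/2$ the first factor is $\asymp z^{-1+1/s}$ and the sum over $w=z-a_tx_t$ contributes $z^{(t-1)/s}$; both ranges give $z^{-1+t/s}$.

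For part ii) I would group the terms by $S=a_1x_1+\cdots+a_tx_t$, apply part i) to each inner sum, and so reduce to the one‑dimensional sum $\sum_{w=1}^{z}w^{-2t/s}(z-w)^{-1+t/s}$ after setting $w=z-S$. Splitting once more at $w\asymp z$ and reading off the size estimates above, the range $w\le z/2$ contributes $z^{-1+t/s}$, $z^{-1/2}\log z$, or $z^{-t/s}$ according as $2t/s$ is larger than, equal to, or smaller than $1$, and the range $w>z/2$ contributes $z^{-t/s}$. Since $1\le t\le s-1$ and $s\ge 2$, every one of these is $\ll z^{-1/s}\log z$ (using $-1+t/s\le -1/s$, $-t/s\le -1/s$, and $-1/2\le -1/s$), the logarithm being genuinely forced only in the balanced case $2t=s$.

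The delicate part is iii), where monotone bounding no longer suffices because I need the exact value. First I would symmetrize: removing the constraint $x_1<\cdots<x_s$ multiplies the sum by $s!$ except for the diagonal terms in which two or more of the $x_i$ coincide, and a computation of the same flavour as part i) shows each diagonal contributes $O(z^{-\delta})$ (a harmless $z^{-1}\log z$ when $s=2$), hence is negligible against the $\Theta(1)$ main term. It then remains to show that the symmetric sum $f_s(z):=\sum_{x_1+\cdots+x_s=z}\prod_i x_i^{-1+1/s}$ tends to $\Gamma^s(1/s)$. I would prove this by induction on the number of convolutions: with $f(n)=n^{-1+1/s}$ and $f_k=f_{k-1}*f$, I claim $f_k(z)\sim \frac{\Gamma^k(1/s)}{\Gamma(k/s)}\,z^{k/s-1}$, the step being that the one‑dimensional convolution $\sum_y f_{k-1}(z-y)f(y)$ is a Riemann sum converging to $\frac{\Gamma^{k-1}(1/s)}{\Gamma((k-1)/s)}\,z^{k/s-1}B\!\left(\tfrac{k-1}{s},\tfrac1s\right)$, after which the identity $B\!\left(\tfrac{k-1}{s},\tfrac1s\right)=\frac{\Gamma((k-1)/s)\Gamma(1/s)}{\Gamma(k/s)}$ telescopes the constants. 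Taking $k=s$ gives $f_s(z)\to\Gamma^s(1/s)/\Gamma(1)=\Gamma^s(1/s)$, whence the ordered sum tends to $\Gamma^s(1/s)/s!=s^s\lambda_s$. The main obstacle is precisely the justification of this convolution limit: the integrand carries the integrable singularities $y^{1/s-1}$ and $(z-y)^{(k-1)/s-1}$ at the two ends, so the passage from the discrete sum to the Beta integral must be controlled by Abel summation against the precise partial‑sum asymptotics of $f$ and $f_{k-1}$ in the two boundary layers, the bulk of the range being handled by the near‑monotonicity of the summand.
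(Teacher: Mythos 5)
Your parts i) and ii) are correct and essentially reproduce the paper's own argument. For i) the paper avoids your induction by first pulling out the factor $(a_1\cdots a_t)^{1-1/s}$ to reduce to unit coefficients, then noting that some variable exceeds $z/t$ (contributing $\ll z^{-1+1/s}$) while the rest contribute $\big(\sum_{y<z}y^{-1+1/s}\big)^{t-1}\ll z^{(t-1)/s}$; your induction on $t$ with the split at $a_tx_t\le z/2$ does the same work in a different order, and your remark that bounded values of $z-a_tx_t$ are absorbed into the implied constant correctly handles the only subtlety. Your ii) (group by the value of $a_1x_1+\cdots+a_tx_t$, apply i), split at $z/2$, discuss the three cases of the exponent $2t/s$) is the paper's proof verbatim, including the observation matching the paper's remark that the logarithm is genuinely needed only when $2t=s$. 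The real divergence is iii): the paper does not prove it at all, it simply cites Lemma 3 of \cite{L95}, whereas you give a self-contained derivation --- symmetrize, kill the diagonal terms, and evaluate the full convolution $f^{*s}$ with $f(n)=n^{-1+1/s}$ by induction via the Beta-function identity $B\big(\tfrac{k-1}{s},\tfrac1s\big)=\Gamma\big(\tfrac{k-1}{s}\big)\Gamma\big(\tfrac1s\big)/\Gamma\big(\tfrac ks\big)$, so that the constants telescope to $\Gamma^s(1/s)$ and the ordered sum is $\Gamma^s(1/s)/s!=s^s\lambda_s$. This outline is sound: the diagonal terms are indeed $O(z^{-1/s})$ because a repeated variable satisfies $y^{-2+2/s}\le y^{-1+1/s}$, after which part i) with $t\le s-1$ applies; and the one technical point you flag --- passing from the discrete convolution to the improper Beta integral --- does go through as you describe, since part i) (with all $a_i=1$) supplies the uniform bound $f^{*(k-1)}(m)\ll m^{-1+(k-1)/s}$, which shows the boundary layers $y\le\epsilon z$ and $y\ge(1-\epsilon)z$ contribute $O\big((\epsilon^{1/s}+\epsilon^{(k-1)/s})z^{k/s-1}\big)$, leaving a genuine Riemann sum in the bulk. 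So your proposal proves strictly more than the paper writes down (it is, in effect, a reconstruction of Landreau's computation), at the cost of carrying out the convolution analysis that the paper outsources to the literature.
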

\begin{proof}
i) We have \begin{eqnarray*}\sum_{\substack{x_1,\dots,x_t\\ a_1x_1+\cdots +a_tx_t=z}}(x_1\cdots x_t)^{-1+1/s}
&= &(a_1\cdots a_t)^{1-1/s}\sum_{\substack{x_1,\dots,x_t\\ a_1x_1+\cdots +a_tx_t=z}}(a_1x_1\cdots a_tx_t)^{-1+1/s}\\
&\le &(a_1\cdots a_t)^{1-1/s}\sum_{\substack{y_1,\dots ,y_t\\ y_1+\cdots +y_t=z}}(y_1\cdots y_t)^{-1+1/s}.\end{eqnarray*}
If $y_1+\cdots +y_t=z$ then at least one of them, say $y_t$, is greater than $z/t$ and is determined by $y_1,\dots,y_{t-1}$. Thus,
\begin{eqnarray*}\sum_{\substack{x_1,\dots,x_t\\ a_1x_1+\cdots +a_tx_t=z}}(x_1\cdots x_t)^{-1+1/s}&\ll &z^{-1+1/s}\sum_{y_1,\dots ,y_{t-1}<z}(y_1\cdots y_{t-1})^{-1+1/s}\\ &\ll & z^{-1+1/s}\left (\sum_{y<z}y^{-1+1/s}\right )^{t-1}\\
&\ll & z^{-1+1/s}(z^{1/s} )^{t-1}\ll z^{-1+t/s}.\end{eqnarray*}

ii) We have \begin{eqnarray*}& &\sum_{\substack{x_1,\dots,x_t\\ a_1x_1+\cdots +a_tx_t<z}}(x_1\cdots x_t)^{-1+1/s}\big (z-(a_1x_1+\cdots +a_tx_t)\big )^{-2t/s}\\ &=&\sum_{m<z}(z-m)^{-2t/s}\sum_{\substack{x_1,\dots,x_t\\ a_1x_1+\cdots +a_tx_t=m}}(x_1\cdots x_t)^{-1+1/s}
\\ (\text{ by i) })& \ll &\sum_{m<z}(z-m)^{-2t/s}m^{-1+t/s}\\
&\ll &\sum_{m\le z/2}(z-m)^{-2t/s}m^{-1+t/s}+\sum_{z/2<m<z}(z-m)^{-2t/s}m^{-1+t/s}\\
&\ll & z^{-2t/s}z^{t/s}+z^{-1+t/s}\sum_{z/2<m<z}(z-m)^{-2t/s}\\
&\ll &z^{-t/s}+z^{-1+t/s}\left (1+\log z+z^{1-2t/s}\right )\\
&\ll &z^{-t/s}+z^{-1+t/s}\log z\\
&\ll &z^{-1/s}\log z. \end{eqnarray*}
\begin{remark} Except in the case when $s=2$ and $t=1$, the upper bound in ii) may be replaced by $z^{-1/s}$.
\end{remark}
iii) It follows from Lemma 3 of \cite{L95}.

\end{proof}

\section{Proof of Theorem \ref{main}}
\subsection{Combinatorial lemmas}
\begin{lemma}\label{d} We have
$$\sum_{\omega\in \Omega_z}P(E_{\omega})\sim \lambda_s$$ as $z\to \infty$.
\end{lemma}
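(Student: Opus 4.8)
The plan is to split the sum over $\omega\in\Omega_z$ according to the cardinality $r=|\omega|$, which ranges from $1$ to $s$, and to show that the whole asymptotic contribution comes from the sets of maximal size $r=s$, the smaller sets contributing only an error term that vanishes as $z\to\infty$.

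First I would record the exact value of $P(E_\omega)$. Since the events $\{x\in A\}$ are independent with $P(x\in A)=\frac1s x^{-1+1/s}$, for $\omega=\{x_1,\dots,x_r\}$ we have
$$P(E_\omega)=\prod_{i=1}^r \frac1s x_i^{-1+1/s}=s^{-r}(x_1\cdots x_r)^{-1+1/s}.$$
Next comes the main term. When $r=s$, a set $\omega=\{x_1,\dots,x_s\}$ lies in $\Omega_z$ precisely when $z\in\sigma(\omega)$; but with $r=s$ the only admissible coefficients are $a_1=\cdots=a_s=1$, so this forces $z=x_1+\cdots+x_s$ with the $x_i$ distinct. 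Summing over such sets, i.e.\ over strictly increasing tuples $x_1<\cdots<x_s$, gives
$$\sum_{\substack{\omega\in\Omega_z\\|\omega|=s}}P(E_\omega)=\frac{1}{s^s}\sum_{\substack{1\le x_1<\cdots<x_s\\x_1+\cdots+x_s=z}}(x_1\cdots x_s)^{-1+1/s}\sim \lambda_s,$$
by Lemma~\ref{t1}~iii), which supplies exactly the constant $s^s\lambda_s$ needed to cancel the factor $s^{-s}$.

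Finally I would dispose of the error terms. For each $r$ with $1\le r\le s-1$, every $\omega\in\Omega_z$ of size $r$ carries at least one admissible representation $z=a_1x_1+\cdots+a_rx_r$ with $a_i\ge1$ and $\sum a_i=s$; bounding the number of sets by the bounded ($s$-dependent) number of compositions of $s$ into $r$ positive parts, and dropping the distinctness constraint, yields
$$\sum_{\substack{\omega\in\Omega_z\\|\omega|=r}}P(E_\omega)\ll \sum_{\substack{a_1+\cdots+a_r=s\\a_i\ge1}}s^{-r}\sum_{\substack{x_1,\dots,x_r\\a_1x_1+\cdots+a_rx_r=z}}(x_1\cdots x_r)^{-1+1/s}\ll z^{-1+r/s},$$
where the inner estimate is Lemma~\ref{t1}~i). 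Since $-1+r/s\le-1/s<0$ for $r\le s-1$, summing over $r$ gives a total error $\ll z^{-1/s}=o(1)$, and adding the main term completes the proof.

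The only genuine subtlety lies in the bookkeeping of this last step: one must check that collapsing a genuine set $\omega$ of size $r<s$ to an ordered tuple with a prescribed composition, and allowing repetitions among the $x_i$, can only overcount nonnegative terms, so that Lemma~\ref{t1}~i) indeed furnishes an upper bound and not merely a heuristic. Once this is granted, everything else is a direct substitution, and the delicacy of the argument sits entirely inside Lemma~\ref{t1}.
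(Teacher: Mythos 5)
Your proposal is correct and follows essentially the same route as the paper: split the sum by $|\omega|$, obtain the main term from $|\omega|=s$ via Lemma \ref{t1} iii) (where the constraint $a_1+\cdots+a_s=s$, $a_i\ge 1$ indeed forces all $a_i=1$), and bound the contribution of $|\omega|\le s-1$ by summing over compositions of $s$ and applying Lemma \ref{t1} i) to get an error $\ll z^{-1/s}$. The overcounting point you flag at the end is harmless for exactly the reason you give — all terms are nonnegative — and the paper handles it the same way.
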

\begin{proof}
\begin{equation}\label{sum}\sum_{\omega\in \Omega_z}P(E_{\omega})=\sum_{\substack{\omega \in \Omega_z\\ |\omega|=s}}P(E_{\omega})+\sum_{\substack{\omega \in \Omega_z\\ |\omega|\le s-1}}P(E_{\omega}).\end{equation}
The main contribution comes from the first sum.
$$\sum_{\substack{\omega \in \Omega_z\\ |\omega|=s}}P(E_{\omega})=\frac 1{s^s}\sum_{\substack{1\le x_1<\dots <x_s\\ x_1+\cdots +x_s=z}}(x_1\cdots x_s)^{-1+1/s}\sim \lambda_s$$ as $z\to\infty$, by Lemma \ref{t1} iii). For the second sum we have
\begin{eqnarray*}\sum_{\substack{\omega \in \Omega_z\\ |\omega|\le s-1}}P(E_{\omega})&\le &\sum_{r\le s-1}\sum_{\substack{a_1,\dots, a_r\\a_1+\cdots +a_r=s}}\sum_{a_1x_1+\cdots+a_rx_r=z}(x_1\cdots x_r)^{-1+1/s}\\
(\text{Lemma }\ref{t1}, i))\ &\ll & \sum_{r\le s-1}z^{\frac rs-1}\ll z^{-1/s}.\end{eqnarray*}.
\end{proof}
\begin{lemma}\label{dis}
For any $z\le z'$ we have
$$\sum_{\substack{\omega\sim \omega'\\ \omega\in \Omega_z,\ \omega'\in \Omega_{z'}}}P(E_{\omega}\cap E_{\omega'})\ll z^{-1/s}\log z.$$
\end{lemma}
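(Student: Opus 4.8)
The plan is to exploit the independence of the events $\{n\in A\}$ to write, for $\omega\sim\omega'$,
$P(E_\omega\cap E_{\omega'})=P(E_{\omega\cup\omega'})=s^{-|\omega\cup\omega'|}\prod_{x\in\omega\cup\omega'}x^{-1+1/s}$, and then to organise the sum according to the combinatorial type of the pair. Writing $\eta=\omega\cap\omega'$, $\mu=\omega\setminus\omega'$, $\mu'=\omega'\setminus\omega$ and $k=|\eta|\ge1$, $p=|\mu|$, $q=|\mu'|$ (with $p+q\ge1$ because $\omega\ne\omega'$, and $k+p\le s$, $k+q\le s$), and recording the finitely many admissible coefficient patterns of the two representations — the coefficients $b_i,c_j$ in $z=\sum_i b_iy_i+\sum_j c_ju_j$ and $b_i',d_\ell$ in $z'=\sum_i b_i'y_i+\sum_\ell d_\ell v_\ell$, where $y_i\in\eta$, $u_j\in\mu$, $v_\ell\in\mu'$ — I would split the whole sum into $O_s(1)$ sums of a single fixed type and bound each of them separately by $z^{-1/s}\log z$.

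For a fixed type with $p,q\ge1$ I would first sum over the non-common variables. Once the common variables are fixed, the two constraints decouple into $\sum_j c_ju_j=z-\sum_i b_iy_i$ and $\sum_\ell d_\ell v_\ell=z'-\sum_i b_i'y_i$, so applying Lemma \ref{t1} i) to each yields
\begin{equation}\notag
\sum_{\mathrm{type}}P(E_{\omega\cup\omega'})\ll\sum_{y_1,\dots,y_k}(y_1\cdots y_k)^{-1+1/s}\Bigl(z-\sum_i b_iy_i\Bigr)^{-1+p/s}\Bigl(z'-\sum_i b_i'y_i\Bigr)^{-1+q/s},
\end{equation}
the sum ranging over $y_i\ge1$ with $\sum_i b_iy_i<z$ and $\sum_i b_i'y_i<z'$. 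It thus remains to bound this $k$-fold sum by $z^{-1/s}\log z$ uniformly for $z\le z'$. The boundary types $p=0$ or $q=0$ are disposed of first: then one constraint becomes rigid (say $\sum_i b_iy_i=z$), and since $\omega\subsetneq\omega'$ or $\omega'\subsetneq\omega$ forces the common set to have $k\le s-1$ elements, Lemma \ref{t1} i) on the rigid constraint together with the trivial bound $(\mathrm{slack})^{-1+q/s}\le1$ (negative exponent, slack $\ge1$) gives a contribution $\ll z^{-1+k/s}\le z^{-1/s}$, the hypothesis $z\le z'$ being used when $q=0$.

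The heart of the matter, and what I expect to be the main obstacle, is the main case $p,q\ge1$. A power count already indicates the correct size: two constraints on $k+p+q\le 2s-1$ variables should produce $z^{-2+(k+p+q)/s}\le z^{-1/s}$, with equality precisely for the dominant type $k=1$, $|\omega|=|\omega'|=s$. To make this rigorous I would bound the two slack exponents by $-k/s$ (legitimate since $p,q\le s-k$ and each slack is $\ge1$), reducing to $\sum_{y}(y_1\cdots y_k)^{-1+1/s}\bigl[(z-\sum_i b_iy_i)(z'-\sum_i b_i'y_i)\bigr]^{-k/s}$; summing over the level sets of $\sum_i b_iy_i$ through Lemma \ref{t1} i) and splitting the range at $\sum_i b_iy_i=z/2$ then shows the bulk is $\ll z^{-k/s}\le z^{-1/s}$.

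The delicate point is uniformity in $z'$ and the emergence of the logarithm. For $z'\ge 2z$ the slack $z'-\sum_i b_i'y_i$ exceeds a fixed multiple of $z$ over the whole range, so it contributes merely an extra factor $z^{-k/s}$ and the bound $z^{-1/s}$ follows with no logarithm. For $z\le z'<2z$, however, the two slacks may be small simultaneously; this critical regime is genuinely present only for the dominant type $k=1$, where, via the substitution $m=z-b_1y$ comparing the two slacks, the sum is reduced to the situation of Lemma \ref{t1} ii) with $t=1$. That lemma supplies the factor $\log z$, which — in accordance with the Remark following Lemma \ref{t1} — is in fact needed only when $s=2$.
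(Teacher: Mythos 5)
Your reduction is, up to notation, exactly the paper's: the classification of a pair $\omega\sim\omega'$ by its common part and coefficient patterns, the summation over the non-common variables via Lemma \ref{t1} i), the disposal of the degenerate types $p=0$ or $q=0$ (the paper's cases $r=t$, $r'=t$), and the passage to
\[
\Sigma:=\sum_{y_1,\dots,y_k}(y_1\cdots y_k)^{-1+1/s}\Bigl(z-\textstyle\sum_i b_iy_i\Bigr)^{-k/s}\Bigl(z'-\textstyle\sum_i b_i'y_i\Bigr)^{-k/s}
\]
are all sound and coincide with the paper (its $t,r-t,r'-t$ are your $k,p,q$). The gaps are in your estimation of $\Sigma$. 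First, the dichotomy at $z'=2z$ rests on a false claim: the two coefficient vectors $(b_i)$ and $(b_i')$ need not be proportional, so the constraint $\sum_i b_iy_i<z$ does not keep $\sum_i b_i'y_i$ away from $z'$. For example, with $s\ge 3$, $k=1$, $b_1=1$, $b_1'=2$ and $z'=2z$, the point $y_1=z-1$ lies in the range of $\Sigma$ and the second slack equals $2$, not $\gg z$; so the regime $z'\ge 2z$ is not ``log-free easy'' by your reasoning, and both slacks can be small simultaneously there as well.

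Second, the ``bulk'' estimate does not follow from the stated steps. The second slack is not constant on a level set $\sum_i b_iy_i=m$ (the two linear forms differ), so Lemma \ref{t1} i) cannot be applied to the inner sum with that factor in place; and if you discard it, the computation yields $\sum_{m\le z/2}(z-m)^{-k/s}m^{-1+k/s}\asymp z^{-k/s}\cdot z^{k/s}=O(1)$, not $O(z^{-k/s})$. Indeed the one-slack sum $\sum_y (y_1\cdots y_k)^{-1+1/s}(z-\sum_i b_iy_i)^{-k/s}$ is of order $1$ (a Beta-type convolution), so any argument that throws away one of the two slack factors, anywhere in the range, cannot succeed: both factors are needed at every point, not only in your ``critical regime''. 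Finally, the tail $\sum_i b_iy_i>z/2$ with $k\ge 2$, which is precisely where the two slacks can be small together, receives no argument at all: that this phenomenon ``is genuinely present only for $k=1$'' is asserted, not proved. All three defects are repaired by the single device you reserve for $k=1$: apply $AB\le A^2+B^2$ to the two slack factors, which decouples the two linear forms, and then apply Lemma \ref{t1} ii) to each resulting sum, getting $\ll z^{-1/s}\log z+(z')^{-1/s}\log z'\ll z^{-1/s}\log z$. That is the paper's proof; it is uniform in $k$, in the coefficient patterns and in $z'$, and it makes your case distinctions unnecessary. Your observation that the logarithm is only genuinely needed when $s=2$ is correct, but it falls out of the proof of Lemma \ref{t1} ii) (see the Remark there), rather than being established by your case analysis.
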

\begin{proof}  If $\omega\in \Omega_z$ then  there exist some $r\le s$ and some positive integers $a_1,\dots,a_r$ with $a_1+\cdots +a_r=s$ such that $a_1x_1+\cdots +a_rx_r=z$.
Thus,  any  pair of sets $\omega\sim \omega'$ with $\omega\in \Omega_z,\ \omega'\in \Omega_{z'},\ z\le z'$ is of the form
\begin{eqnarray*}
\omega=\{x_1,\dots,x_t,u_{t+1},\dots, u_{r}\}\\
\omega'=\{x_1,\dots,x_t,v_{t+1},\dots, v_{r'}\}
\end{eqnarray*}
with $1\le t\le r, r'\le s$ and  positive integers $a_1,\dots,a_r$ and $b_1,\dots,b_{r'}$ with
\begin{eqnarray*}
a_1x_1+\cdots +a_tx_t+a_{t+1}u_{t+1}+\cdots +a_r u_r=z\ \\
b_1x_1+\cdots +b_tx_t+b_{t+1}v_{t+1}+\cdots +b_{r'} v_{r'}=z'.
\end{eqnarray*}
Of course if $r=t$ then $\omega=\{x_1,\dots,x_r\}$ and $r'\ge t+1$. Otherwise $\omega=\omega'$. And similarly, when $r'=t$, we have $r\ge t+1$.

Given $z,z',t,r,r',a_1,\dots,a_r,b_1,\dots,b_{r'}$ we estimate the sum $$\sum_{\substack{\omega\sim \omega'}}^*P(E_{\omega} \cap E_{\omega'})$$
where the sum is extended to the pairs $\omega\sim \omega'$ satisfying  the above conditions. We distinguish several cases according to the values of $r$ and $r'$.

\begin{itemize}
  \item If $r\ge t+1$ and $r'\ge t+1$, we have
\begin{eqnarray*}\sum_{\substack{\omega\sim \omega'}}^*P(E_{\omega} \cap E_{\omega'})& &\\
 \le  \sum_{\substack{x_1,\dots,x_t\\ a_1x_1+\cdots +a_tx_t<z\\b_1x_1+\cdots +b_tx_t<z' } }(x_1\cdots x_t)^{-1+1/s}&\times &\left (\sum_{\substack{u_{t+1},\dots, u_r,\\a_{t+1}u_{t+1}+\dots +a_ru_r\\ = z-(a_1x_1+\cdots +a_tx_t  )    }}(u_{t+1}\cdots u_r)^{-1+1/s}\right )\\
& \times &\left (\sum_{\substack{v_{t+1},\dots, v_{r'},\\b_{t+1}v_{t+1}+\dots +b_{r'}v_{r'}\\ =z'-(b_1x_1+\cdots +b_tx_t  )   }}(v_{t+1}\cdots v_{r'})^{-1+1/s}\right )\\
\end{eqnarray*}
By Lemma \ref{t1} i) we have
\begin{eqnarray*}& &\sum_{\substack{\omega\sim \omega'}}^*P(E_{\omega} \cap E_{\omega'})\nonumber\\
& \ll & \sum_{\substack{x_1,\dots,x_t\\ a_1x_1+\cdots +a_tx_t<z\\b_1x_1+\cdots +b_tx_t<z' } }(x_1\cdots x_t)^{-1+\frac 1s}\left (z-(a_1x_1+\cdots +a_tx_t)  \right )^{\frac{r-t}s-1} \left (z'-(b_1x_1+\cdots +b_tx_t  )\right )^{\frac{r'-t}s-1}\nonumber\\
 &\ll &\sum_{\substack{x_1,\dots,x_t\\ a_1x_1+\cdots +a_tx_t<z\\b_1x_1+\cdots +b_tx_t<z' } }(x_1\cdots x_t)^{-1+1/s}\left (z-(a_1x_1+\cdots +a_tx_t)  \right )^{-t/s} \left (z'-(b_1x_1+\cdots +b_tx_t  )\right )^{-t/s} \end{eqnarray*}
 Using the inequality  $AB\le A^2+B^2$, we get
 \begin{eqnarray*}
\sum_{\substack{\omega\sim \omega'}}^*P(E_{\omega} \cap E_{\omega'}) &\le &\sum_{\substack{x_1,\dots,x_t\\ a_1x_1+\cdots +a_tx_t<z } }(x_1\cdots x_t)^{-1+1/s}\left (z-(a_1x_1+\cdots +a_tx_t)  \right )^{-2t/s} \\
&+&\sum_{\substack{x_1,\dots,x_t\\ b_1x_1+\cdots +b_tx_t<z' } }(x_1\cdots x_t)^{-1+1/s} \left (z'-(b_1x_1+\cdots +b_tx_t  )\right )^{-2t/s}\\ (\text{Lemma }\ref{t1}, ii))\ &\ll & z^{-1/s}\log z.
 \end{eqnarray*}

 \bigskip

  \item $r=t$ and $r'\ge t+1$. In this case we have
\begin{eqnarray*}
\sum_{\substack{\omega\sim \omega'}}^*P(E_{\omega} \cap E_{\omega'}) &\le & \sum_{\substack{x_1,\dots,x_t\\ a_1x_1+\cdots +a_tx_t=z\\b_1x_1+\cdots +b_tx_t<z' } }(x_1\cdots x_t)^{-1+1/s}\\
&\times &\sum_{\substack{v_{t+1},\dots, v_{r'}\\b_{t+1}v_{t+1}+\dots +b_{r'}v_{r'}\\ =z'-(b_1x_1+\cdots +b_tx_t  )   }}(v_{t+1}\cdots v_{r'})^{-1+1/s}
\\
(\text{Lemma \ref{t1} i)})&\le  &\sum_{\substack{x_1,\dots,x_t\\ a_1x_1+\cdots +a_tx_t=z\\b_1x_1+\cdots +b_tx_t<z' } }(x_1\cdots x_t)^{-1+1/s}
 \times \left (z'-(b_1x_1+\cdots +b_tx_t  )\right )^{\frac{r'-t}s-1}\\
&\le &\sum_{\substack{x_1,\dots,x_t\\ a_1x_1+\cdots +a_tx_t=z } }(x_1\cdots x_t)^{-1+1/s}\ll z^{\frac ts-1}\ll z^{-1/s}.
\end{eqnarray*}
\item  $r'=t$ and $r\ge t+1$ is similar to the previous one.
\end{itemize}
\end{proof}
\begin{lemma}\label{dis}
Let $\alpha>0$ and the interval $I_i=[i,i+\alpha\log i]$. For any $i\le j$ we have
$$\sum_{\substack{\omega\sim \omega'\\ \omega\in \Omega_{I_i},\ \omega'\in \Omega_{I_j}}}P(E_{\omega}\cap E_{\omega'})\ll i^{-1/s}(\log i)^2(\log j).$$
\end{lemma}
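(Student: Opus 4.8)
The plan is to deduce this estimate directly from the preceding lemma (the per-point bound $\sum_{\omega\sim\omega',\,\omega\in\Omega_z,\,\omega'\in\Omega_{z'}}P(E_\omega\cap E_{\omega'})\ll z^{-1/s}\log z$ for $z\le z'$) by covering each interval by its integer points. The starting observation is that, by the very definition of $\Omega_{I_i}$, a set $\omega$ lies in $\Omega_{I_i}$ precisely when $\sigma(\omega)$ meets $I_i$, i.e.\ when $z\in\sigma(\omega)$ for some integer $z\in I_i$; equivalently $\Omega_{I_i}=\bigcup_{z\in I_i\cap\Z}\Omega_z$, and likewise $\Omega_{I_j}=\bigcup_{z'\in I_j\cap\Z}\Omega_{z'}$.

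Using this, I would bound the sum over pairs $\omega\sim\omega'$ with $\omega\in\Omega_{I_i}$ and $\omega'\in\Omega_{I_j}$ by the double sum $\sum_{z\in I_i\cap\Z}\sum_{z'\in I_j\cap\Z}S(z,z')$, where $S(z,z')$ denotes the sum of $P(E_\omega\cap E_{\omega'})$ over pairs $\omega\sim\omega'$ with $\omega\in\Omega_z$ and $\omega'\in\Omega_{z'}$. Passing to this double sum can only increase the value, since a given admissible pair $(\omega,\omega')$ may now be counted several times, once for each choice of $z\in\sigma(\omega)\cap I_i$ and $z'\in\sigma(\omega')\cap I_j$; for an upper bound this over-counting is harmless.

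Then I would estimate each $S(z,z')$ by the preceding lemma. That lemma applies to ordered arguments $z\le z'$ and returns a bound governed by the smaller argument; since both the summand $P(E_\omega\cap E_{\omega'})$ and the relation $\omega\sim\omega'$ are symmetric in $\omega$ and $\omega'$, the quantity $S(z,z')$ is unchanged under the swap $(z,\omega)\leftrightarrow(z',\omega')$, so I may always apply the lemma to $m:=\min(z,z')$ and obtain $S(z,z')\ll m^{-1/s}\log m$. The decisive point is that $m\asymp i$ uniformly: indeed $z\ge i$ and $z'\ge j\ge i$ force $m\ge i$, while $m\le z\le i+\alpha\log i$ forces $m\le i+\alpha\log i$. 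Hence $m^{-1/s}\ll i^{-1/s}$ and $\log m\ll\log i$, so $S(z,z')\ll i^{-1/s}\log i$ for every admissible pair $(z,z')$. Finally, $I_i\cap\Z$ contains at most $\alpha\log i+1\ll\log i$ integers and $I_j\cap\Z$ contains $\ll\log j$ integers, so the double sum has $\ll(\log i)(\log j)$ terms, each $\ll i^{-1/s}\log i$, which yields the claimed bound $\ll i^{-1/s}(\log i)^2\log j$.

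I expect no genuine difficulty here: the estimate is essentially a packaging of the preceding lemma over the $\ll\log i$ and $\ll\log j$ integer points of the two intervals. The only points that require care are the harmless over-counting incurred when passing to the double sum, and the verification that the smaller representation target $\min(z,z')$ stays of order $i$. This last fact is what keeps the per-pair bound uniformly $\ll i^{-1/s}\log i$ and prevents any degradation in the regime where $I_i$ and $I_j$ overlap (which can occur only when $j-i\ll\log i$, so that $\log i\asymp\log j$ there anyway).
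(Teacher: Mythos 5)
Your proof is correct and follows essentially the same route as the paper: write $\Omega_{I_i}=\bigcup_{z\in I_i}\Omega_z$ and $\Omega_{I_j}=\bigcup_{z'\in I_j}\Omega_{z'}$, bound the sum by the double sum over integer points $z\in I_i$, $z'\in I_j$, apply the preceding per-point lemma to each term, and multiply by the $\ll(\log i)(\log j)$ choices of $(z,z')$. You are in fact slightly more careful than the paper, which invokes the per-point bound without commenting on the ordering hypothesis $z\le z'$ when $I_i$ and $I_j$ overlap; your symmetry argument via $m=\min(z,z')$, together with the observation that $m\asymp i$ uniformly, cleanly handles that point.
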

\begin{proof}
\begin{eqnarray*}\sum_{\substack{\omega\sim \omega'\\ \omega\in \Omega_{I_i},\ \omega'\in \Omega_{I_j}}}P(E_{\omega}\cap E_{\omega'})&\le & \sum_{z\in I_i,\ z'\in I_j}\sum_{\substack{\omega\sim \omega'\\ \omega \in \Omega_z,\ \omega'\in \Omega_{z'}}}P(E_{\omega} \cap E_{\omega'})\\  &\ll&\sum_{z\in I_i,\ z'\in I_j}z^{-1/s}\log z\ll   (\log i)^2 (\log j) i^{-1/s}.\end{eqnarray*}
\end{proof}
\begin{lemma}\label{pp} We have
$$\prod_{\omega \in \Omega_{I_i}}P(E_{\omega}^c)=i^{-\alpha\lambda_s+o(1)}.  $$
\end{lemma}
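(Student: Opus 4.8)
The plan is to take logarithms and reduce the product to the single sum $\Sigma_i:=\sum_{\omega\in\Omega_{I_i}}P(E_\omega)$. Note first that $\Omega_{I_i}$ is finite, since any $\omega$ in it satisfies $\sum a_jx_j=z$ for some $z\in I_i$ with all $a_j\ge1$, forcing every element of $\omega$ to be at most $i+\alpha\log i$. Granting (as shown below) that $\max_{\omega\in\Omega_{I_i}}P(E_\omega)\to0$, I would use $\log(1-x)=-x+O(x^2)$, valid for $0\le x\le1/2$, to write
$$\log\prod_{\omega\in\Omega_{I_i}}P(E_\omega^c)=\sum_{\omega\in\Omega_{I_i}}\log(1-P(E_\omega))=-\Sigma_i+O\Big(\sum_{\omega\in\Omega_{I_i}}P(E_\omega)^2\Big).$$
It then suffices to establish $\Sigma_i\sim\alpha\lambda_s\log i$ together with $\sum_{\omega\in\Omega_{I_i}}P(E_\omega)^2=o(\log i)$, since exponentiating gives $\prod P(E_\omega^c)=e^{-\alpha\lambda_s\log i+o(\log i)}=i^{-\alpha\lambda_s+o(1)}$.

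To evaluate $\Sigma_i$ I would split $\Omega_{I_i}$ according to whether $|\omega|=s$ or $|\omega|\le s-1$. The key observation is that if $|\omega|=s$ then every coefficient in the definition of $\sigma(\omega)$ must equal $1$, so $\sigma(\omega)=\{x_1+\cdots+x_s\}$ is a single point; hence each such $\omega$ lies in exactly one $\Omega_z$ with $z\in I_i$, and
$$\sum_{\substack{\omega\in\Omega_{I_i}\\|\omega|=s}}P(E_\omega)=\sum_{z\in I_i}\sum_{\substack{\omega\in\Omega_z\\|\omega|=s}}P(E_\omega).$$
By Lemma \ref{t1} iii) the inner sum equals $\frac1{s^s}\sum_{x_1<\cdots<x_s,\,x_1+\cdots+x_s=z}(x_1\cdots x_s)^{-1+1/s}\sim\lambda_s$, uniformly for $z\in I_i$ as $i\to\infty$; summing over the $\sim\alpha\log i$ integers of $I_i$ yields $\alpha\lambda_s\log i\,(1+o(1))$. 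For the sets with $|\omega|\le s-1$ I would bound, exactly as in the proof of Lemma \ref{d},
$$\sum_{\substack{\omega\in\Omega_{I_i}\\|\omega|\le s-1}}P(E_\omega)\le\sum_{z\in I_i}\sum_{\substack{\omega\in\Omega_z\\|\omega|\le s-1}}P(E_\omega)\ll\sum_{z\in I_i}z^{-1/s}\ll i^{-1/s}\log i=o(\log i).$$
Combining the two parts gives $\Sigma_i\sim\alpha\lambda_s\log i$.

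It remains to control the quadratic error. Since every $\omega\in\Omega_{I_i}$ satisfies $\sum a_jx_j=z$ for some $z\in I_i$ with $\sum a_j=s$, its largest element is at least $z/s\ge i/s$; as the remaining factors $x_j^{-1+1/s}$ are at most $1$, this gives $P(E_\omega)\ll i^{-1+1/s}$ uniformly. In particular $\max P(E_\omega)\to0$, which justifies the logarithmic expansion used above, and
$$\sum_{\omega\in\Omega_{I_i}}P(E_\omega)^2\le\Big(\max_{\omega\in\Omega_{I_i}}P(E_\omega)\Big)\,\Sigma_i\ll i^{-1+1/s}\log i=o(\log i),$$
because $-1+1/s<0$ for $s\ge2$. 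The three displays together prove the lemma. The only genuinely delicate point is the multiplicity with which a single $\omega$ can meet $I_i$: this is precisely what is handled by separating the cases, since the dominant sets ($|\omega|=s$) meet $I_i$ with multiplicity one, while the lower-order sets ($|\omega|\le s-1$) contribute only $o(\log i)$ in total, so the naive count over $z\in I_i$ is asymptotically exact.
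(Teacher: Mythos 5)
Your proof is correct and follows essentially the same route as the paper's: take logarithms, reduce to the double sum $\sum_{z\in I_i}\sum_{\omega\in\Omega_z}P(E_\omega)$, extract the main term from the sets with $|\omega|=s$ via Lemma \ref{t1} iii), and absorb the sets with $|\omega|\le s-1$ via Lemma \ref{t1} i). The only differences are bookkeeping: the paper handles the fact that an $\omega$ with $|\omega|\le s-1$ may lie in several $\Omega_z$ by sandwiching the product between $\prod_{z\in I_i}\prod_{\omega\in\Omega_z}P(E_\omega^c)$ and $\prod_{z\in I_i}\prod_{\omega\in\Omega_z,\,|\omega|=s}P(E_\omega^c)$ (both comparisons valid because every factor is at most $1$), whereas you split the sum exactly using the same multiplicity-one observation for $|\omega|=s$; moreover, you make the quadratic error in the expansion $\log(1-x)=-x+O(x^2)$ explicit, a point the paper leaves implicit in its ``$\sim$'' signs.
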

\begin{proof}
We observe that
$$\prod_{z\in I_i}\prod_{\omega\in \Omega_z}P(E_{\omega}^c)\le    \prod_{\omega \in \Omega_{I_i}}P(E_{\omega}^c)\le  \prod_{\substack{\omega \in \Omega_{I_i}\\|\omega|=s}}P(E_{\omega}^c)=   \prod_{z\in I_i}\prod_{\substack{\omega\in \Omega_z\\ |\omega|=s}}P(E_{\omega}^c). $$

Writing $P(E_{\omega}^c)=1-P(E_{\omega})$ and taking logarithms we have
\begin{eqnarray*}\log \left (\prod_{z\in I_i}\prod_{\omega\in \Omega_z}P(E_{\omega}^c)\right )&=&\sum_{z\in I_i}\sum_{\omega\in \Omega_z}\log(1-P(E_{\omega}))\\
&\sim &-\sum_{z\in I_i}\sum_{\omega\in \Omega_z}P(E_{\omega})\\
(\text{Lemma }\ref{d})&\sim & -\sum_{z\in I_i}\lambda_s\\
&\sim & -\alpha\lambda_s\log i.\end{eqnarray*}
On the other hand,
\begin{eqnarray*}\log \left (\prod_{z\in I_i}\prod_{\substack{\omega\in \Omega_z\\|\omega|=s}}P(E_{\omega}^c)\right )&=&
\sum_{z\in I_i}\sum_{\substack{\omega\in \Omega_z\\|\omega|=s}}\log(1-P(E_{\omega}))\\
&\sim &-\sum_{z\in I_i}\sum_{\substack{\omega\in \Omega_z\\|\omega|=s}}P(E_{\omega})\\&= &-\sum_{z\in I_i}\sum_{\substack{x_1<\cdots <x_s\\x_1+\cdots +x_s=z}}\frac 1{s^s}(x_1\cdots x_s)^{-1+1/s}\\
(\text{Lemma \ref{t1} iii)})&\sim & -\lambda_s\alpha\log i.\end{eqnarray*}
\end{proof}
\begin{lemma}\label{igual}
 We have
$$P(F_i)=i^{-\alpha\lambda_s+o(1)}. $$
\end{lemma}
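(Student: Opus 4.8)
The plan is to recognize that $F_i$ is \emph{exactly} the event $\bigcap_{\omega\in\Omega_{I_i}}E_\omega^c$, and then to squeeze $P(F_i)$ between the two sides of Janson's inequality (Theorem \ref{des1}). Indeed, an integer $z$ lies in $sA$ precisely when $E_\omega$ holds for some $\omega\in\Omega_z$, so $sA\cap I_i=\emptyset$ if and only if $E_\omega^c$ holds for every $\omega$ with $\sigma(\omega)\cap I_i\neq\emptyset$, that is, for every $\omega\in\Omega_{I_i}$. Before invoking Janson I would check its hypothesis $P(E_\omega)\le 1/2$: for $\omega\in\Omega_{I_i}$ one has $z=\sum a_jx_j\ge i$ with $|\omega|\le s$ and each $a_j\le s$, so the largest element of $\omega$ is $\gg i$, forcing $P(E_\omega)=\prod_{x\in\omega}\tfrac1s x^{-1+1/s}$ to be small once $i$ is large. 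Hence the hypothesis holds for all sufficiently large $i$, which is all that is needed for an asymptotic statement.

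With this in hand, Janson's inequality yields
$$\prod_{\omega\in\Omega_{I_i}}P(E_\omega^c)\le P(F_i)\le \prod_{\omega\in\Omega_{I_i}}P(E_\omega^c)\cdot\exp\Big(2\sum_{\substack{\omega\sim\omega'\\\omega,\omega'\in\Omega_{I_i}}}P(E_\omega\cap E_{\omega'})\Big).$$
By Lemma \ref{pp} the product equals $i^{-\alpha\lambda_s+o(1)}$, so the left-hand inequality already gives the lower bound $P(F_i)\ge i^{-\alpha\lambda_s+o(1)}$. For the matching upper bound it remains to show that the exponential correction factor contributes only $i^{o(1)}$.

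This is where the interval form of Lemma \ref{dis} enters: taking $j=i$ there bounds the correlation sum by $i^{-1/s}(\log i)^2(\log i)=i^{-1/s}(\log i)^3$, which tends to $0$ as $i\to\infty$. Consequently $\exp\big(2\sum_{\omega\sim\omega'}P(E_\omega\cap E_{\omega'})\big)=1+o(1)=i^{o(1)}$, and the right-hand inequality gives $P(F_i)\le i^{-\alpha\lambda_s+o(1)}\cdot i^{o(1)}=i^{-\alpha\lambda_s+o(1)}$. Combining the two bounds completes the proof. The only genuine obstacle is the verification of the hypothesis $P(E_\omega)\le 1/2$ of Janson's inequality; everything else is simply assembling Lemmas \ref{pp} and \ref{dis}. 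The key point to keep in mind is that the negligibility of the correlation sum is what makes the events $E_\omega^c$ behave as if independent, so that $P(F_i)$ matches the independence prediction $\prod_\omega P(E_\omega^c)$ up to the factor $i^{o(1)}$.
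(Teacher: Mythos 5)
Your proof is correct and follows the paper's own argument exactly: identify $F_i$ with $\bigcap_{\omega\in\Omega_{I_i}}E_\omega^c$, squeeze $P(F_i)$ via Janson's inequality, evaluate the product by Lemma \ref{pp}, and kill the correlation term by the interval form of Lemma \ref{dis} with $j=i$. Your extra care in checking the hypothesis $P(E_\omega)\le 1/2$ (which the paper asserts without comment, and which in fact holds for every nonempty $\omega$ since $\frac1s x^{-1+1/s}\le\frac12$ for all $x\ge 1$ and $s\ge 2$) is a welcome refinement, not a deviation.
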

\begin{proof}
We observe that $$F_i=\bigcap_{\omega\in \Omega_{I_i}}E_{\omega}^c.$$
Since $P(E_{\omega})\le 1/2$ for any $\omega$, Theorem \ref{des1} applies and we have
$$\prod_{\omega\in \Omega_{I_i}}P(E_{\omega}^c)\le   P(F_i)\le \prod_{\omega\in \Omega_{I_i}}P(E_{\omega}^c)\times
\exp\left (2\sum_{\substack{\omega \sim \omega'\\ \omega,\omega' \in \Omega_{I_i}}}P(E_{\omega}\cap E_{\omega'})   \right). $$

After Lemma \ref{pp} we only need to prove $$\sum_{\substack{\omega \sim \omega'\\ \omega,\omega' \in \Omega_{I_i}}}P(E_{\omega}\cap E_{\omega'})=o(1).$$ But
it is a consequence of  Lemma \ref{dis} with $j=i$.
$$\sum_{\substack{\omega \sim \omega'\\ \omega,\ \omega' \in \Omega_{I_i}}}P(E_{\omega}\cap E_{\omega'})\ll i^{-1/s+o(1)}.$$
\end{proof}
\begin{lemma}\label{pp2} If $i<j$ and $I_i\cap I_j=\emptyset$ then
$$\prod_{\omega \in \Omega_{I_i}\cup \Omega_{I_j}}P(E_{\omega}^c)\le P(F_i)P(F_j)(1+O(j^{-1/s}\log j)).  $$
\end{lemma}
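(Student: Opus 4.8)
\emph{Plan.} The idea is to single out the sets $\omega$ that are common to the two families $\Omega_{I_i}$ and $\Omega_{I_j}$ and to show that their total weight is negligible. Writing the index set of the product as the disjoint union $\Omega_{I_i}\cup(\Omega_{I_j}\setminus\Omega_{I_i})$, I would first factor
$$\prod_{\omega\in\Omega_{I_i}\cup\Omega_{I_j}}P(E_\omega^c)=\prod_{\omega\in\Omega_{I_i}}P(E_\omega^c)\cdot\frac{\prod_{\omega\in\Omega_{I_j}}P(E_\omega^c)}{\prod_{\omega\in\Omega_{I_i}\cap\Omega_{I_j}}P(E_\omega^c)}.$$
The lower bound in Janson's inequality (Theorem \ref{des1}), exactly as used in Lemma \ref{igual}, gives $\prod_{\omega\in\Omega_{I_i}}P(E_\omega^c)\le P(F_i)$ and $\prod_{\omega\in\Omega_{I_j}}P(E_\omega^c)\le P(F_j)$. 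Since $P(E_\omega)\le 1/2$ we have $1/P(E_\omega^c)\le\exp(2P(E_\omega))$, so the whole statement reduces to the single estimate
$$\sum_{\omega\in\Omega_{I_i}\cap\Omega_{I_j}}P(E_\omega)\ll j^{-1/s}\log j,$$
exponentiation of which turns the remaining denominator into the factor $1+O(j^{-1/s}\log j)$.

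The decisive point is a restriction on the size of a shared $\omega$. If $\omega\in\Omega_{I_i}\cap\Omega_{I_j}$ then $\sigma(\omega)$ meets both $I_i$ and $I_j$; as $I_i\cap I_j=\emptyset$, the set $\sigma(\omega)$ contains at least two distinct integers. This excludes $|\omega|=1$, since then $\sigma(\omega)=\{sx_1\}$, and it also excludes $|\omega|=s$, since the only decomposition $a_1+\cdots+a_s=s$ with all $a_k\ge 1$ is $a_1=\cdots=a_s=1$, making $\sigma(\omega)=\{x_1+\cdots+x_s\}$ a single point as well. Hence only sets with $2\le|\omega|=t\le s-1$ contribute (in particular the sum is empty when $s=2$). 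Moreover, for such an $\omega$ there are coefficients with $\sum a_kx_k=z\in I_i$ and $\sum b_kx_k=z'\in I_j$; from $a_k\ge 1$ and $b_k\le s$ I get $j\le z'=\sum b_kx_k\le s\sum x_k\le s\sum a_kx_k=sz\le s(i+\alpha\log i)$, so either the sum is empty or $i\asymp j$.

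It then remains to bound the contribution of each $t$ with $2\le t\le s-1$. I would group the sets according to $t$, to the coefficient vector $(a_1,\dots,a_t)$ realizing a value $z\in I_i$, and to the coefficient vector $(b_1,\dots,b_t)$ realizing $z'\in I_j$ --- of which there are only boundedly many in terms of $s$ --- then simply discard the constraint $\sum b_kx_k\in I_j$ and apply Lemma \ref{t1} i):
$$\sum_{\substack{x_1,\dots,x_t\\ a_1x_1+\cdots+a_tx_t=z}}(x_1\cdots x_t)^{-1+1/s}\ll z^{t/s-1}\le z^{-1/s},$$
the last inequality using $t\le s-1$. Summing over the $\ll\log i$ integers $z\in I_i$ yields $\ll i^{-1/s}\log i$ for each of the boundedly many choices of $(t,a,b)$, hence $\sum_{\omega\in\Omega_{I_i}\cap\Omega_{I_j}}P(E_\omega)\ll i^{-1/s}\log i$.

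Finally I would convert this into the required bound: if the sum is nonzero then $i\asymp j$ by the dichotomy above, so $i^{-1/s}\log i\ll j^{-1/s}\log j$, while if it is zero the estimate is trivial. The only genuinely delicate step is the cardinality dichotomy of the second paragraph: once one sees that $|\omega|=s$ contributes nothing, the problem stays within the range $t\le s-1$, where a single application of Lemma \ref{t1} i), after throwing away one of the two interval constraints, already suffices --- no recourse to the harder part ii) of Lemma \ref{t1} is needed here.
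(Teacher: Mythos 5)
Your proof is correct and follows essentially the same route as the paper: the same factorization of the product over $\Omega_{I_i}\cup\Omega_{I_j}$, Janson's lower bound applied to the two full products, the key observation that disjointness of $I_i$ and $I_j$ forces any shared $\omega$ to have $|\omega|\le s-1$ (so that $\sigma(\omega)$ is not a singleton), and Lemma \ref{t1} i) to show the shared sets have negligible total weight. The only cosmetic difference is that the paper discards the $I_i$-constraint and sums over $z'\in I_j$, obtaining the bound $j^{-1/s}\log j$ directly, which renders your $i\asymp j$ comparability dichotomy unnecessary.
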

\begin{proof}
It is clear that
$$\prod_{\omega \in \Omega_{I_i}\cup \Omega_{I_j}}P(E_{\omega}^c)=\left (\prod_{\omega \in \Omega_{I_i}}P(E_{\omega}^c)\right )\left (\prod_{\omega \in  \Omega_{I_j}}P(E_{\omega}^c)\right )\left (\prod_{\omega \in \Omega_{I_i}\cap \Omega_{I_j}}P(E_{\omega}^c)\right )^{-1}.  $$
The lower bound of the Janson's inequality, applied to the first two products, gives
$$\prod_{\omega \in \Omega_{I_i}\cup \Omega_{I_j}}P(E_{\omega}^c)\le P(F_i)P(F_j)\left (\prod_{\omega \in \Omega_{I_i}\cap \Omega_{I_j}}P(E_{\omega}^c)\right )^{-1}.  $$

The logarithm of the last factor is
\begin{eqnarray*}-\sum_{\omega \in \Omega_{I_i}\cap \Omega_{I_j}}\log (1-P(E_{\omega}))&\sim  &\sum_{\omega \in \Omega_{I_i}\cap \Omega_{I_j}}P(E_{\omega})\end{eqnarray*}
Since $I_i\cap I_j=\emptyset$, if $\omega \in \Omega_{I_i}\cap \Omega_{I_j}$ then $|\omega|\le s-1$. Thus
\begin{eqnarray*} \sum_{\omega \in \Omega_{I_i}\cap \Omega_{I_j}}P(E_{\omega})&\le &\sum_{\substack{\omega \in  \Omega_{I_j}\\ |\omega|\le s-1}}P(E_{\omega})\\ &\le &\sum_{z\in I_j}\sum_{r\le s-1}\sum_{a_1+\cdots+a_r=s}\sum_{\substack{x_1,\dots,
 x_r\\a_1x_1+\cdots+a_rx_r=z}}(x_1\cdots x_r)^{-1+1/s}\\ (\text{Lemma }\ref{t1}\ i))&\ll &j^{-1/s}(\log j).  \end{eqnarray*}
Thus
$$\left (\prod_{\omega \in \Omega_{I_i}\cap \Omega_{I_j}}P(E_{\omega}^c)\right )^{-1}\le 1+O(j^{-1/s}(\log j))$$
which ends the proof of the Lemma.
\end{proof}
\subsection{End of the proof}
After these Lemmas we are ready to finish the proof of Theorem \ref{main}.

If $\alpha>1/\lambda_s$ then $$\sum_{i}P(F_i)=\sum_i i^{-\alpha\lambda_s+o(1)}< \infty$$ and Theorem \ref{Borel} implies that with probability $1$ only  finite many events $F_i$ occur. This proves that
$$\limsup_{k\to \infty}\frac{b_{k+1}-b_k}{\log b_k}\le 1/\lambda_s.$$

If $\alpha<1/\lambda_s$ then $$Z_n=\sum_{i\le n}P(F_i)=\sum_{i\le n}i^{-\alpha\lambda_s+o(1)}=n^{1-\alpha\lambda_s+o(1)}\to \infty.$$
If in addition \begin{equation}\label{l}\lim_{n\to \infty}\frac{\sum_{1\le i<j\le n} P(F_i\cap F_j)-P (F_i)P (F_j)}{Z_n^2}=0,\end{equation}
Theorem \ref{Borel} implies that with probability $1$ infinitely many events $F_i$ occur
and
$$\limsup_{k\to \infty}\frac{b_{k+1}-b_k}{\log b_k}\ge 1/\lambda_s.$$

We next prove \eqref{l}. We observe that $$ F_i\cap F_j=\bigcap_{\omega\in \Omega_{I_i}\cup \Omega_{I_j}}E_{\omega}^c, $$
so we can use Janson inequality to get \begin{eqnarray}\label{mu}P(F_i\cap F_j)&\le &\prod_{\omega\in \Omega_{I_i}\cup \Omega_{I_j}}P\big (E_{\omega}^c   \big )\times \exp \left  (2\sum_{\substack{\omega \sim \omega'\\ \omega,\omega'\in \Omega_{I_i}\cup \Omega_{I_j}}}P (E_{\omega} \cap E_{\omega'})\right  ).  \nonumber\end{eqnarray}
      Observe that
      \begin{eqnarray*}\sum_{\substack{\omega \sim \omega'\\ \omega,\omega'\in \Omega_{I_i}\cup \Omega_{I_j}}}P (E_{\omega} \cap E_{\omega'})&\le & \sum_{\substack{\omega \sim \omega'\\ \omega,\omega'\in \Omega_{I_i}}}P (E_{\omega} \cap E_{\omega'})\\ &+&  \sum_{\substack{\omega \sim \omega'\\ \omega,\omega'\in \Omega_{I_j}}}P (E_{\omega} \cap E_{\omega'})\\ &+& \sum_{\substack{\omega \sim \omega'\\ \omega\in \Omega_{I_i},\ \omega'\in \Omega_{I_j}}}P (E_{\omega} \cap E_{\omega'}).\end{eqnarray*}

Applying Lemma \ref{dis} to the three sums we have   \begin{eqnarray*}\sum_{\substack{\omega \sim \omega'\\ \omega,\omega'\in \Omega_{I_i}\cup \Omega_{I_j}}}P (E_{\omega} \cap E_{\omega'})\ll i^{-1/s}(\log i)^3+ j^{-1/s}(\log j)^3+i^{-1/s}(\log i)^2(\log j),\end{eqnarray*}
and so
 \begin{equation}\label{exp}\exp \left (2\sum_{\substack{\omega \sim \omega'\\ \omega,\omega'\in \Omega_{I_i}\cup \Omega_{I_j}}}P (E_{\omega} \cap E_{\omega'})\right)\le 1+O\left (i^{-1/s}(\log i)^2(\log j)\right ).\end{equation}

 Thus,
 \begin{equation}\label{ine}P(F_i\cap F_j)\le \prod_{\omega\in \Omega_{I_i}\cup \Omega_{I_j}}P\big (E_{\omega}^c   \big )\times \big (1+O(i^{-1/s}(\log i)^2(\log j))\big ). \end{equation}

 Since $\alpha<\lambda_s$, the number $\beta=(1-\alpha\lambda_s)/2$ is positive. Now we split the sum in \eqref{l} into three sums:

\begin{eqnarray*}\Delta_{1n}&=&\sum_{\substack{1\le i<j\le n\\ n^{\beta}<i<j-\alpha\log j}} P(F_i\cap F_j)-P (F_i)P (F_j)\\
\Delta_{2n}&=&\sum_{\substack{1\le i<j\le n\\ i\le n^{\beta}}} P(F_i\cap F_j)-P (F_i)P (F_j)\\
\Delta_{3n}&=&\sum_{\substack{1\le i<j\le n\\ j-\log j\le i\le j}} P(F_i\cap F_j)-P (F_i)P (F_j)
\end{eqnarray*}

 \begin{itemize}
  \item [i)] Estimate of $\Delta_{1n}$. Since in this case we have $I_i\cap I_j=\emptyset $, we can apply Lemma \ref{pp2} to \eqref{ine} to get
  $$\prod_{\omega\in \Omega_{I_i}\cup \Omega_{I_j}}P\big (E_{\omega}^c   \big )\le P(F_i)P(F_j)(1+O(j^{-1/s}\log j)).$$
  This inequality and \eqref{ine} gives
  $$P(F_i\cap F_j)\le P(F_i)P(F_j)\times \big (1+O(i^{-1/s}(\log i)^2(\log j))\big ),$$
so
\begin{eqnarray*}P(F_i\cap F_j)-P(F_i)P(F_j) &\ll &P(F_i)P(F_j)i^{-1/s}(\log i)^2(\log j)\\
&\ll & n^{-\beta/s+o(1)}P(F_i)P(F_j).
\end{eqnarray*}
Thus
\begin{eqnarray}\label{f1}\Delta_{1n}&\ll & n^{-\beta/s+o(1)}\sum_{i,j\le n}P(F_i)P(F_j)\ll n^{-\beta/s+o(1)}Z_n^2.\end{eqnarray}

  \item [ii)]Estimate of $\Delta_{2n}$.  In this case we use the crude estimate \begin{equation}\label{crude}P(F_i\cap F_j)-P(F_i)P(F_j)\le P(F_i\cap F_j)\le P(F_j).\end{equation} We have
\begin{eqnarray}\label{f2} \Delta_{2n} &\le &\sum_{j\le n}\sum_{i\le j^{\beta}}P(F_j)\le \sum_{j\le n}j^{\beta}P(F_j)\le n^{\beta}Z_n\le n^{-\beta+o(1)}Z_n^2, \end{eqnarray}
since $Z_n=n^{1-\alpha\lambda_s+o(1)}=n^{2\beta+o(1)}$.
\item[iii)]Estimate of $\Delta_{3n}$. Again we use \eqref{crude} and we have
\begin{eqnarray}\label{f3} \Delta_{3n} &\le &\sum_{j\le n}\sum_{j-\alpha\log j\le i\le j}P(F_j)\le \alpha\log n\sum_{j\le n}P(F_j)\le n^{-2\beta+o(1)}Z^2_n. \end{eqnarray}
\end{itemize}

Finally, using the estimates in \eqref{f1},\eqref{f2} and\eqref{f3} we have
$$\frac{\sum_{1\le i<j\le n} P(F_i\cap F_j)-P (F_i)P (F_j)}{Z_n^2}\ll n^{-\beta/s+o(1)}+  n^{-\beta+o(1)}+n^{-2\beta+o(1)}\to 0.$$
This ends the proof of \eqref{l} and hence that of Theorem \ref{main}.

\section*{Acknowledgment}The first author was supported by grants MTM 2011-22851 of MICINN  and ICMAT Severo Ochoa project SEV-2011-0087.  Both authors are thankful to \'Ecole Polytechnique which made their collaboration easier.

\end{document}